\newtheorem{thm}{Theorem}[section]
\newtheorem{lem}[thm]{Lemma}
\newtheorem{prop}[thm]{Proposition}
\theoremstyle{definition}
\newtheorem{defin}[thm]{Definition}
\newtheorem{rem}[thm]{Remark}
\newtheorem{exa}[thm]{Example}
\numberwithin{equation}{section}
\newcommand{\bea}{\begin{eqnarray}}
\newcommand{\eea}{\end{eqnarray}}
\newcommand{\clh}{\mathcal{H}}
\newcommand{\cll}{\mathcal{L}}
\newcommand{\clm}{\mathcal{M}}
\newcommand{\cls}{\mathcal{S}}
\newcommand{\clw}{\mathcal{W}}
\newcommand{\raro}{\rightarrow}
\def\textmatrix#1&#2\\#3&#4\\{\bigl({#1 \atop #3}\ {#2 \atop #4}\bigr)}
\def\dispmatrix#1&#2\\#3&#4\\{\left({#1 \atop #3}\ {#2 \atop #4}\right)}
\newcommand{\be}{\begin{equation}}
\newcommand{\ee}{\end{equation}}
\newcommand{\ben}{\begin{eqnarray*}}
\newcommand{\een}{\end{eqnarray*}}
\newcommand{\NI}{\noindent}
\newcommand{\bi}{\begin{itemize}}
\newcommand{\ei}{\end{itemize}}
\newcommand{\D}{\ensuremath{\mathbb{D}}}
\begin{document}

\baselineskip=17pt


\title[Doubly commuting sub-Hardy-modules]{Doubly commuting submodules of the Hardy module over polydiscs}

\author[J. Sarkar]{Jaydeb Sarkar}
\address{J. Sarkar, Indian Statistical Institute\\
Statistics and Mathematics Unit\\
8th Mile, Mysore Road\\
Bangalore, 560059, India}
\email{jay@isibang.ac.in, jaydeb@gmail.com}
\urladdr{http://www.isibang.ac.in/~jay/}

\author[A. Sasane]{Amol Sasane}
\address{A. Sasane, Mathematics Department\\
London School of Economics\\
Houghton Street\\
London WC2A 2AE, U.K.}
\email{sasane@lse.ac.uk}

\author[B. D. Wick]{Brett D. Wick$^\ddagger$}
\address{Brett D. Wick, School of Mathematics\\
Georgia Institute of Technology\\
686 Cherry Street\\
Atlanta, GA USA 30332-0160, U.S.A.}
\email{wick@math.gatech.edu}
\urladdr{www.math.gatech.edu/~wick}
\thanks{$\ddagger$ Research supported in part by National Science Foundation DMS grants \# 1001098 and \# 955432.}

\date{}

\begin{abstract}
In this note we establish a vector-valued version of
Beurling's Theorem (the Lax-Halmos Theorem) for the polydisc.
As an application of the main result, we provide necessary
and sufficient conditions for the ``weak'' completion problem
in $H^\infty(\mathbb{D}^n)$.
\end{abstract}

\subjclass[2010]{Primary 46J15; Secondary 47A15, 30H05, 47A56}

\keywords{Invariant subspace,
shift operator, Doubly commuting,
Hardy algebra on the polydisc,
Completion Problem}

\maketitle

\section{Introduction and Statement of Main Results}

In \cite{B}, Beurling described all the invariant subspaces for the
operator $M_z$ of ``multiplication by $z$'' on the Hilbert space $H^2(\D)$
of the disc. In \cite{Lax}, Peter Lax extended Beurling's result to
the (finite-dimensional) vector-valued case (while also considering
the Hardy space of the half plane). Lax's vectorial case proof was
further extended to infinite-dimensional vector spaces by Halmos, see
\cite{NF}.  The characterization of $M_z$-invariant subspaces obtained
is the following famous result.

\begin{thm}[Beurling-Lax-Halmos]
  Let $\mathcal{S}$ be a closed nonzero subspace of $H^2_{E_*}(\D)$. Then
  $\mathcal{S}$ is invariant under multiplication by $z$ if and only if there
  exists a Hilbert space $E$ and an inner function $\Theta\in
  H^\infty_{E\rightarrow E_*}(\D)$ such that $ \mathcal{S}=\Theta H^2_E(\D)$.
\end{thm}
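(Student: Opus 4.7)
The plan is to prove the two directions separately, with the nontrivial content residing in the necessity. For sufficiency, if $\mathcal{S}=\Theta H^2_E(\D)$ with $\Theta$ inner, then $\mathcal{S}$ is the range of the isometry $M_\Theta$, hence closed, and the identity $z(\Theta f)=\Theta(zf)$ immediately gives $z\mathcal{S}\subseteq\mathcal{S}$. So I would dispose of this direction in a single line and then turn to the converse.

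For the necessity, the central tool is the Wold decomposition applied to the isometry $T:=M_z|_{\mathcal{S}}$ on $\mathcal{S}$. First I would form the \emph{wandering subspace}
\[
E := \mathcal{S}\ominus z\mathcal{S},
\]
which is a closed subspace of $H^2_{E_*}(\D)$. A direct computation shows that $\{z^n E\}_{n\ge 0}$ are pairwise orthogonal (use the shift-invariance of the inner product on $H^2_{E_*}(\D)$ and the defining orthogonality $E\perp z\mathcal{S}$). The key structural identity to establish is
\[
\mathcal{S} \;=\; \bigoplus_{n\ge 0} z^n E.
\]
The inclusion $\supseteq$ is immediate; for $\subseteq$, I would invoke purity: because $\mathcal{S}\subseteq H^2_{E_*}(\D)$ and $\bigcap_{n\ge 0} z^n H^2_{E_*}(\D)=\{0\}$, the isometry $T$ has no unitary part, so the Wold decomposition collapses to $\mathcal{S}=\bigoplus_{n\ge 0} T^n E$.

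Next I would construct the inner multiplier. Define $\Theta:H^2_E(\D)\to H^2_{E_*}(\D)$ as the unique bounded linear extension of the rule $z^n e\mapsto z^n e$, where on the left $e\in E$ is regarded as a vector in the coefficient Hilbert space $E$ and on the right $z^n e$ is the element of $\mathcal{S}\subseteq H^2_{E_*}(\D)$ obtained by multiplying the $H^2_{E_*}$-valued function $e$ by $z^n$. The orthogonal decomposition of the previous paragraph shows $\Theta$ is an isometry from $H^2_E(\D)$ onto $\mathcal{S}$. By construction $\Theta M_z=M_z\Theta$, i.e.\ $\Theta$ intertwines the shifts on $H^2_E(\D)$ and $H^2_{E_*}(\D)$. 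A standard fact (identifying shift-commuting bounded operators between vector-valued Hardy spaces with multiplication by bounded operator-valued analytic functions) then yields $\Theta\in H^\infty_{E\to E_*}(\D)$ with $\mathcal{S}=\Theta H^2_E(\D)$, and the isometric property forces $\Theta$ to be inner.

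The main obstacle is the Wold-type decomposition $\mathcal{S}=\bigoplus_{n\ge 0}z^n E$: verifying mutual orthogonality is routine, but the completeness step, which relies on the purity of the shift on $H^2_{E_*}(\D)$, is what makes the Beurling-Lax-Halmos theorem nontrivial. Once that is in hand, the passage from an intertwining isometry to an inner operator-valued multiplier is essentially formal.
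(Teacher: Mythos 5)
Your proof is correct, and it is essentially the standard Halmos argument that the paper itself quotes for this classical statement and then mirrors in its own proof of the polydisc generalization (Theorem~\ref{Beurling} via the wandering subspace Theorem~\ref{wandering}): decompose $\mathcal{S}=\bigoplus_{n\ge 0}z^nE$ with $E=\mathcal{S}\ominus z\mathcal{S}$ using purity of the shift, then realize the resulting intertwining isometry as multiplication by an inner operator-valued function. No gaps; the approach matches the paper's.
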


For $n\in \mathbb{N}$ and $E_*$ a Hilbert space, $H^2_{E_*}(\D^n)$ is the set of
all $E_*$-valued holomorphic functions in the polydisc $\D^n$, where
${\mathbb{D}}:=\{z\in {\mathbb{C}}:|z|<1\}$ (with boundary
${\mathbb{T}}$) such that
$$
\|f\|_{H^2_{E_*}(\D^n)}:=
\sup_{0<r<1} \Big( \int_{{\mathbb{T}}^n} \| f(r \mathbf{z})\|_{E_*}^2 d \mathbf{z}\Big)^{1/2}
<+\infty.
$$
On the other hand, if ${\mathcal{L}}(E,E_*)$ denotes the set of all
continuous linear transformations from $E$ to $E_*$, then
$H^\infty_{E\rightarrow E_*}(\D^n)$ denotes the set of all
${\mathcal{L}}(E,E_*)$-valued holomorphic functions with
$\|f\|_{H^\infty_{E\rightarrow E_*}(\D^n)}:=\displaystyle\sup_{\mathbf{z}\in
  {\mathbb{D}}^n} \|f(\mathbf{z})\|_{{\mathcal{L}}(E,E_*)}<\infty.$

  An operator-valued $\Theta\in H^\infty_{E\rightarrow E_*}(\D^n)$ {\em inner} if the pointwise a.e. boundary values
  are isometries:
  $$
  (\Theta(\zeta))^*\Theta(\zeta)=I_{E} \textrm{ for almost all }\zeta \in \mathbb{T}^n.
  $$

A natural question is then to ask what happens in the case of several
variables, for example when one considers the Hardy space
$H^2_{E_*}({\mathbb{D}}^n)$ of the polydisc ${\mathbb{D}}^n$.
It is known that in general, a Beurling-Lax-Halmos type
characterization of subspaces of the Hardy Hilbert space is
not possible \cite{Rudin}.   It is however, easy to see that
the Hardy space on the polydisc $H^2_{E_*}(\mathbb{D}^n)$,
when $n >1$, satisfies the \textit{doubly commuting} property,
that is, for all $1 \leq i < j \leq n$
\[
M_{z_i}^* M_{z_j} = M_{z_j} M_{z_i}^*.
\]
We impose this additional assumption to the submodules of
$H^2_{E_*}(\mathbb{D}^n)$ and call that class of submodules
as doubly commuting submodules. More precisely:

\begin{defin}
  A commuting family of bounded linear operators $\{T_1, \ldots,
  T_n\}$ on some Hilbert space $\clh$ is said to be {\em doubly
    commuting} if \[T_i T^*_j = T^*_j T_i,\] for all $1 \leq i, j \leq
  n$ and $i \neq j$.

  A closed subspace $\cls$ of $H^2_{E}(\mathbb{D}^n)$ which is
  invariant under $M_{z_1},\cdots, M_{z_n}$ is said to be a
  \textit{doubly commuting submodule} if $\cls$ is a submodule, that is, $M_{z_i} \cls \subseteq \cls$
  for all $i$ and the
  family of module multiplication operators $\{R_{z_1}, \ldots,
  R_{z_n}\}$ where
  \[R_{z_i} := M_{z_i}|_{\cls},\]for all $1 \leq i \leq n$, is doubly
  commuting, that is, \[R_{z_i} R_{z_j}^* = R_{z_j}^* R_{z_i},\]for
  all $i \neq j$ in $\{1, \ldots, n\}$.
\end{defin}

In this note we completely characterize the doubly commuting
submodules of the vector-valued Hardy module
$H^2_{E_*}(\mathbb{D}^n)$ over the polydisc, and this is the content of our main
theorem.  This result is an analogue of the classical
Beurling-Lax-Halmos Theorem on the Hardy space over the unit disc.

\begin{thm}
\label{Beurling}
Let $\cls$ be a closed nonzero subspace of $H^2_{E_*}(\mathbb{D}^n)$.
Then $\cls$ is a doubly commuting submodule if and only if there
exists a Hilbert space $E$ with $E \subseteq E_*$, where the inclusion
is up to unitary equivalence, and an inner function $\Theta \in
H^{\infty}_{E\to E_*}(\mathbb{D}^n)$ such that
\[\cls = M_{\Theta} H^2_{E}(\mathbb{D}^n).\]
\end{thm}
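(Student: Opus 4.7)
The sufficiency is routine. If $\Theta$ is inner then $M_\Theta : H^2_E(\D^n) \to H^2_{E_*}(\D^n)$ is an isometry, so $M_\Theta M_\Theta^* = P_\cls$ where $\cls := M_\Theta H^2_E(\D^n)$. Invariance of $\cls$ under each $M_{z_i}$ is immediate from $M_{z_i}M_\Theta = M_\Theta M_{z_i}$; taking adjoints also gives $M_\Theta^* M_{z_j}^* = M_{z_j}^* M_\Theta^*$. Combined with $M_\Theta^* M_\Theta = I$ this yields
\[
R_{z_i} R_{z_j}^* = M_\Theta M_{z_i} M_{z_j}^* M_\Theta^*|_\cls = M_\Theta M_{z_j}^* M_{z_i} M_\Theta^*|_\cls = R_{z_j}^* R_{z_i},
\]
the middle equality using the doubly commuting property of $\{M_{z_1},\ldots,M_{z_n}\}$ on $H^2_E(\D^n)$.

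For the converse, the plan proceeds in three stages. First, I would verify that $\{R_{z_1},\ldots, R_{z_n}\}$ is a family of doubly commuting \emph{pure} isometries on $\cls$: each $R_{z_i}$ is an isometry because $M_{z_i}$ is, and $R_{z_i}^{*k} = P_\cls M_{z_i}^{*k}|_\cls$ tends to $0$ strongly since $M_{z_i}^{*k} \to 0$ strongly on $H^2_{E_*}(\D^n)$. Second, I would invoke the multivariate Wold decomposition for doubly commuting pure isometries to obtain
\[
\cls = \bigoplus_{\alpha \in \mathbb{Z}_+^n} R_z^\alpha W, \qquad W := \bigcap_{i=1}^n \ker R_{z_i}^* = \bigcap_{i=1}^n (\cls \ominus z_i \cls).
\]
Double commutativity is exactly what makes these summands orthogonal across distinct multi-indices (for $i\neq j$ and $w \in W$, $R_{z_i}^* R_{z_j} w = R_{z_j} R_{z_i}^* w = 0$), while purity makes their direct sum exhaust $\cls$. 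The assignment $z^\alpha w \mapsto R_z^\alpha w$ ($w \in W$) then extends to a unitary $U : H^2_W(\D^n) \to \cls$ intertwining $M_{z_i}$ with $R_{z_i}$ for every $i$.

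Third, composing $U$ with the inclusion $\cls \hookrightarrow H^2_{E_*}(\D^n)$ produces an isometry $V : H^2_W(\D^n) \to H^2_{E_*}(\D^n)$ that intertwines multiplication by each $z_i$ on both sides. I would then define $\Theta(z) \in \clb(W, E_*)$ by $\Theta(z)w := (Vw)(z)$, where $w \in W$ is identified with the corresponding constant in $H^2_W(\D^n)$. The intertwining identities force $V\phi(z) = \Theta(z)\phi(z)$ first on $W$-valued polynomials and then on all of $H^2_W(\D^n)$ by density. Boundedness of $V$ gives $\Theta \in H^\infty_{W \to E_*}(\D^n)$; the isometry property is equivalent to $M_\Theta^* M_\Theta = I$, which is exactly the inner condition on $\Theta$. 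Taking $E := W$, the almost-everywhere pointwise isometry $\Theta(\zeta) : E \to E_*$ provides the desired embedding of $E$ into $E_*$ up to unitary equivalence.

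The main obstacle is the middle stage: the multivariate Wold decomposition is the one genuinely nontrivial input, and it is precisely where the doubly commuting hypothesis is indispensable. This hypothesis is also what prevents the construction from extending to arbitrary $M_{z_i}$-invariant subspaces of $H^2_{E_*}(\D^n)$, consistent with the well-known failure of a full Beurling-Lax-Halmos theorem on the polydisc.
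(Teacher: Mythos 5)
Your proposal is correct and follows essentially the same route as the paper: prove each $R_{z_i}$ is a pure isometry, apply the Wold decomposition for doubly commuting isometries to get $\cls=\bigoplus_{\alpha}z^{\alpha}W$ with $W=\bigcap_i(\cls\ominus z_i\cls)$, realize the resulting isometric module map $V:H^2_W(\D^n)\to H^2_{E_*}(\D^n)$ as $M_\Theta$ with $\Theta$ inner, and verify the converse from $P_\cls=M_\Theta M_\Theta^*$. The only cosmetic differences are that the paper proves the Wold/wandering-subspace step directly by induction (its Theorem 2.4) and obtains $E\subseteq E_*$ by noting that $W$ consists of constant functions, whereas you invoke the decomposition as a known result and deduce the embedding from the a.e.\ isometric boundary values of $\Theta$.
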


In the special scalar case $E_*=\mathbb{C}$ and when $n=2$ (the
bidisc), this characterization was obtained by Mandrekar in
\cite{Mandrekar}, and the proof given there relies on the Wold
decomposition for two variables \cite{Slocinski}.  Our proof is
based on the more natural language of Hilbert modules and a
generalization of Wold decomposition for doubly commuting isometries
\cite{Sarkar}.

As an application of this theorem, we can establish a version of the
``Weak'' Completion Property for the algebra $H^\infty(\D^n)$.  Suppose that
$E\subset E_c$.  Recall that the {\em Completion Problem} for
$H^\infty(\D^n)$ is the problem of characterizing the functions $f\in
H^\infty_{E\rightarrow E_c}(\D^n)$ such that there exists an
invertible function $F\in H^\infty_{E_c\rightarrow E_c}(\D^n)$ with
$F\vert_{E}=f$.

In the case of $H^\infty(\D)$, the Completion Problem was settled by
Tolokonnikov in \cite{Tolokonnikov}.  In that paper, it was pointed
out that there is a close connection between the Completion Problem
and the characterization of invariant subspaces of $H^2(\D)$.  Using
Theorem \ref{Beurling} we then have the following analogue of the
results in \cite{Tolokonnikov}.

\begin{thm}[Tolokonnikov's Lemma for the Polydisc]
\label{Tolo}
Let $f\in H^\infty_{E\rightarrow E_c}(\D^n)$ with $E\subset E_c$ and
$\dim E, \dim E_c<\infty$.  Then the following statements are equivalent:
\begin{itemize}
\item[(i)] There exists a function $g\in H^\infty_{E_c\rightarrow
    E}(\D^n)$ such that $gf\equiv I$ in $\D^n$ and the operators
  $M_{z_1}, \ldots, M_{z_n}$ doubly commute on the subspace $\ker M_g$.
\item[(ii)] There exists a function $F\in H^\infty_{E_c\rightarrow
    E_c}(\D^n)$ such that $F\vert_{E}=f$, $F\vert_{E_c\ominus E}$ is
  inner, and $F^{-1}\in H^\infty_{E_c\rightarrow E_c}(\D^n)$.
\end{itemize}
\end{thm}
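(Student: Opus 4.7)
The plan is to handle the two implications separately, with Theorem \ref{Beurling} supplying the crucial ingredient in the nontrivial direction (i)$\Rightarrow$(ii).

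For (ii)$\Rightarrow$(i), I would decompose $E_c = E \oplus (E_c\ominus E)$ and write $F$ as the block row $[\,f,\,\tilde f\,]$ with $\tilde f := F|_{E_c\ominus E}$ inner, and $F^{-1}$ as the corresponding block column $\binom{g}{h}$. Expanding $F^{-1}F = I$ and $FF^{-1} = I$ yields
\[
gf = I_E,\quad g\tilde f = 0,\quad hf = 0,\quad h\tilde f = I_{E_c\ominus E},\quad fg + \tilde f h = I_{E_c},
\]
so $g$ is the required left $H^\infty$-inverse. From $g\tilde f = 0$ we get $\tilde f H^2_{E_c\ominus E}(\D^n) \subseteq \ker M_g$, while any $v\in\ker M_g$ satisfies $v = fgv + \tilde f hv = \tilde f hv$, giving $\ker M_g = \tilde f H^2_{E_c\ominus E}(\D^n)$. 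As $\tilde f$ is inner, $M_{\tilde f}$ is an isometry intertwining each $M_{z_i}$ on $H^2_{E_c\ominus E}(\D^n)$ with $M_{z_i}|_{\ker M_g}$, so the doubly commuting property transfers.

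For (i)$\Rightarrow$(ii), the hypothesis makes $\ker M_g$ a doubly commuting submodule of $H^2_{E_c}(\D^n)$, and Theorem \ref{Beurling} furnishes a Hilbert space $\tilde E\subseteq E_c$ (up to unitary equivalence) and an inner $\Theta\in H^\infty_{\tilde E\to E_c}(\D^n)$ with $\ker M_g = \Theta H^2_{\tilde E}(\D^n)$. The central step is that the bounded map
\[
T := M_f \oplus M_\Theta : H^2_E(\D^n)\oplus H^2_{\tilde E}(\D^n) \to H^2_{E_c}(\D^n),\quad (h_1,h_2)\mapsto fh_1 + \Theta h_2,
\]
is a bijection: surjectivity via $v = f(gv) + (v - fgv)$, where the second term lies in $\ker M_g = \Theta H^2_{\tilde E}$; injectivity by applying $M_g$ to $fh_1+\Theta h_2 = 0$, using $gf = I_E$ and $g\Theta = 0$ (a consequence of $\Theta H^2_{\tilde E}\subseteq \ker M_g$) to get $h_1 = 0$, and then using isometry of $M_\Theta$. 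The open mapping theorem now produces a bounded $T^{-1}$, which intertwines each $M_{z_i}$; its second component is therefore a bounded shift-commuting operator $H^2_{E_c}(\D^n)\to H^2_{\tilde E}(\D^n)$, hence $M_\phi$ for some $\phi\in H^\infty_{E_c\to\tilde E}(\D^n)$ by the standard commutant description. Expanding $T^{-1}T = I$ and $TT^{-1} = I$ gives $\phi f = 0$, $\phi\Theta = I_{\tilde E}$, and $fg + \Theta\phi = I_{E_c}$. At a boundary point $\zeta$ where $f(\zeta)$ is injective and $\Theta(\zeta)$ is isometric, these identities force $E_c = f(\zeta)E \oplus \Theta(\zeta)\tilde E$, so $\dim\tilde E = \dim E_c - \dim E$. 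Choose a unitary $U\colon E_c\ominus E \to \tilde E$ and set
\[
F := [\,f,\,\Theta U\,],\qquad F^{-1} := \binom{g}{\,U^*\phi\,}.
\]
By construction $F|_E = f$ and $F|_{E_c\ominus E} = \Theta U$ is inner, and the five identities make $FF^{-1} = F^{-1}F = I_{E_c}$ immediate.

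The only real obstacle lies in producing the $H^\infty$-multiplier $\phi$: Theorem \ref{Beurling} identifies the Beurling shape of $\ker M_g$ but says nothing about a holomorphic left inverse of $\Theta$ that annihilates $fH^2_E$. The resolution is to recast the algebraic splitting $H^2_{E_c}(\D^n) = fH^2_E(\D^n) + \Theta H^2_{\tilde E}(\D^n)$ as the topological isomorphism $T$, invoke the open mapping theorem for boundedness of $T^{-1}$, and then use the identification of the polydisc-shift commutant with $H^\infty$ to upgrade the second component of $T^{-1}$ to an $H^\infty$-valued function. Finite-dimensionality of $E$ and $E_c$ is used only once, in the boundary-point dimension count that ensures $F$ acts between spaces of equal dimension.
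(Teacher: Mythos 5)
Your proof is correct, and while your (ii)$\Rightarrow$(i) is essentially the paper's argument (block-decompose $F$ and $F^{-1}$, identify $\ker M_g=\tilde f H^2_{E_c\ominus E}(\D^n)$, and invoke the converse half of Theorem \ref{Beurling}), your (i)$\Rightarrow$(ii) takes a genuinely different route. The paper pins down $\dim E_a=\dim(E_c\ominus E)$ via the Lemma on Local Rank (Lemma \ref{Lemma_on_local_rank}), with $\mathrm{rank}\,g=\dim E$ coming from $gf=I$, and then writes the inverse explicitly as the column with components $g$ and $\Theta^*(I-fg)$, the point being that $(I-fg)H^2_{E_c}(\D^n)\subseteq \Theta H^2_{E_c\ominus E}(\D^n)$ makes $\Theta^*(I-fg)$ an honest $H^\infty$ multiplier. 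You instead realize the splitting $H^2_{E_c}(\D^n)=fH^2_E(\D^n)+\Theta H^2_{\tilde E}(\D^n)$ as a bounded bijection $T$, obtain boundedness of $T^{-1}$ from the open mapping theorem, and upgrade its second component to a multiplier $\phi\in H^\infty_{E_c\to\tilde E}(\D^n)$ by the shift-intertwining fact --- the same fact from \cite{BLTT} that the paper already uses in proving Theorem \ref{Beurling}, so this is within the paper's toolkit; your $\phi$ coincides in effect with the paper's $\Theta^*(I-fg)$. The substantive divergence is the dimension count: you read $\dim\tilde E=\dim E_c-\dim E$ off the identities $gf=I$, $g\Theta=0$, $\phi f=0$, $\phi\Theta=I$, $fg+\Theta\phi=I$, which force $E_c=f(\zeta)E\oplus\Theta(\zeta)\tilde E$; note this works at \emph{any interior} point $\zeta\in\D^n$, since injectivity of $f(\zeta)$ and $\Theta(\zeta)$ is automatic from $g(\zeta)f(\zeta)=I$ and $\phi(\zeta)\Theta(\zeta)=I$, so your appeal to a boundary point with $\Theta(\zeta)$ isometric is unnecessary (though harmless). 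What each approach buys: the paper's construction is analytically lighter (no open mapping theorem; boundedness of $\Theta^*(I-fg)$ is free since $\|\Theta^*\|\le 1$), but it rests on Lemma \ref{Lemma_on_local_rank}, which is exactly where the paper's Remark locates the finite-dimensionality obstruction; your argument bypasses that lemma entirely and uses $\dim E,\dim E_c<\infty$ only in the final pointwise count, a genuine structural simplification that also hints at how one might attack the infinite-dimensional case.
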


\begin{rem} Theorem~\ref{Tolo} for the polydisc is different from
  Tolokonnikov's lemma in the disc in which one does not demand that
  the completion $F$ has the property that $F|_{E_c \ominus E}$ is
  inner.  But, from the proof of Tolokonnikov's lemma in the case of
  the disc (see \cite{Nikolski2}), one can see that the following
  statements are equivalent for $f\in H^\infty_{E\rightarrow E_c}(\D)$
  with $E\subset E_c$ and $\dim E<\infty$:
\begin{itemize}
\item[(i)] There exists a function $g\in H^\infty_{E_c\rightarrow
    E}(\D)$ such that $gf\equiv I$ in $\D$.
\item[(ii)] There exists a function $F\in H^\infty_{E_c\rightarrow
    E_c}(\D)$ such that $F\vert_{E}=f$, and $F^{-1}\in
  H^\infty_{E_c\rightarrow E_c}(\D)$.
\item[(ii$^\prime$)] There exists a function $F\in
  H^\infty_{E_c\rightarrow E_c}(\D)$ such that $F\vert_{E}=f$,
  $F|_{E_c \ominus E}$ is inner, and $F^{-1}\in
  H^\infty_{E_c\rightarrow E_c}(\D)$.
\end{itemize}
In the polydisc case it is unclear how the conditions
\begin{itemize}
\item[(II)] There exists a function $F\in H^\infty_{E_c\rightarrow
    E_c}(\D^n)$ such that $F\vert_{E}=f$, and $F^{-1}\in
  H^\infty_{E_c\rightarrow E_c}(\D^n)$.
\item[(II$^\prime$)] There exists a function $F\in
  H^\infty_{E_c\rightarrow E_c}(\D^n)$ such that $F\vert_{E}=f$,
  $F|_{E_c \ominus E}$ is inner, and $F^{-1}\in
  H^\infty_{E_c\rightarrow E_c}(\D^n)$.
\end{itemize}
are related. We refer to the Completion Problem in {\em (II)} as the {\em Strong Completion Problem},
while the one in {\em (II$^\prime$)} as the {\em Weak Completion Problem}. Whether the two are
equivalent is an open problem.

We also remark that in the disc case, Tolokonnikov's Lemma was proved by Sergei Treil \cite{Tre}
without any assumptions about the finite dimensionality of $E, E_c$. However, our proof
 of Theorem~\ref{Tolo} relies on Lemma~\ref{Lemma_on_local_rank}, whose validity we do not know
 without the assumption on the finite dimensionality of $E$ and $E_c$.
\end{rem}

\begin{exa}
\label{example1_6}
As a simple illustration of  Theorem~\ref{Tolo}, take $n=3$, $\dim E=1$, $\dim E_c=3$ and
$$
f:=\left[\begin{array}{ccc} e^{z_1}\\ e^{z_2} \\ e^{z_3} \end{array}\right]\in (H^\infty(\D^3))^{3\times 1}.
$$
With
 $
g:=\displaystyle \left[\begin{array}{ccc} e^{-z_1} &  0 & 0 \end{array}\right]\in (H^\infty(\D^2))^{1\times 3},
$
we see that $gf=1$. We have
\begin{eqnarray*}
\ker M_{g}
&=&
\left\{ \left[\begin{array}{ccc} \varphi_1 \\ \varphi_2 \\ \varphi_3  \end{array}\right]
\in (H^2(\D^3))^{3\times 1}: e^{-z_1} \varphi_1=0\right\}
\\
&=&
\left\{ \left[\begin{array}{ccc} \varphi_1 \\ \varphi_2 \\ \varphi_3  \end{array}\right]
\in (H^2(\D^3))^{3\times 1}: \varphi_1=0\right\}
= \Theta (H^2(\D^2))^{2\times 1},
\end{eqnarray*}
where $\Theta$ is the inner function
$$
\Theta:=\left[\begin{array}{ccc} 0 & 0 \\ 1 & 0 \\ 0 & 1   \end{array}\right]\in  (H^\infty(\D^3))^{3\times 2}.
$$
As $\Theta$ is inner, it follows from Theorem~\ref{Beurling} that
$M_{z_1}, M_{z_2} , M_{z_3}$ doubly commute on  the submodule $\Theta (H^2(\D^3))^{2\times 1} =\ker M_{g}$.
Hence $f$ can be completed to an invertible matrix. In fact, with
$$
F:=\left[\begin{array}{cc} f & \Theta\end{array}\right]
= \left[\begin{array}{ccc} e^{z_1} &  0 & 0  \\
         e^{z_2} &  1 & 0 \\
         e^{z_3} & 0 & 1
        \end{array}\right],
$$
one can easily see that $F$ is invertible as an element of $(H^\infty(\D^3))^{3\times 3}$.
\end{exa}

In Section \ref{s1} we give a proof of Theorem~\ref{Beurling}, and
subsequently, in Section~\ref{s2}, we use this theorem to study the
Weak Completion Problem for $H^\infty(\D^n)$, providing a proof of
Theorem~\ref{Tolo}.

\section{Beurling-Lax-Halmos Theorem for the Polydisc}
\label{s1} In this section we present a complete characterization of
``reducing submodules'' and a proof of the Beurling-Lax-Halmos
theorem for doubly commuting submodules of $H^2_{E}(\D^n)$.

Recall that a closed subspace $\cls \subseteq H^2_{E}(\mathbb{D}^n)$
is said to be a {\em reducing submodule} of $H^2_{E}(\mathbb{D}^n)$
if $M_{z_i} \cls, \,M_{z_i}^* \cls \subseteq \cls$ for all $i = 1,
\ldots, n$.

We start by reviewing some definitions and some well-known facts
about the vector-valued Hardy space over polydisc. For more details
about reproducing kernel Hilbert spaces over domains in
$\mathbb{C}^n$, we refer the reader to \cite{DMS}. Let
\[ \mathbb{S} (\bm{z}, \bm{w})=\prod_{j=1}^n(1-\overline{w}_j z_j)^{-1}. \quad
\quad \quad ((\bm{z}, \bm{w}) \in \mathbb{D}^n \times
\mathbb{D}^n)\] be the Cauchy kernel on the polydisc $\D^n$. Then
for some Hilbert space $E$, the kernel function $\mathbb{S}_E$ of
$H^2_E(\mathbb{D}^n)$ is given by
\[\mathbb{S}_E(\bm{z}, \bm{w}) = \mathbb{S}(\bm{z}, \bm{w}) I_E. \quad \quad \quad ((\bm{z}, \bm{w}) \in
\mathbb{D}^n \times \mathbb{D}^n)\]In particular,
$\{\mathbb{S}(\cdot, \bm{w}) \eta : \bm{w} \in \mathbb{D}^n, \eta
\in E\}$ is a \textit{total subset} for $H^2_E(\mathbb{D}^n)$, that
is,\[\overline{\mbox{span}}\{\mathbb{S}(\cdot, \bm{w}) \eta : \bm{w}
\in \mathbb{D}^n, \eta \in E\} = H^2_E(\mathbb{D}^n),\]where
$\mathbb{S}(\cdot, \bm{w}) \in H^2(\mathbb{D}^n)$ and
\[(\mathbb{S}(\cdot, \bm{w}))(z) = \mathbb{S}(\bm{z}, \bm{w}),\] for all $\bm{z}, \bm{w} \in
\mathbb{D}^n$. Moreover, for all $f \in H^2_E(\mathbb{D}^n)$,
$\bm{w} \in \mathbb{D}^n$ and $\eta \in E$ we have
\[\langle f, \mathbb{S}(\cdot, \bm{w}) \eta \rangle_{H^2_E(\mathbb{D}^n)}
= \langle f(\bm{w}), \eta\rangle_E.\]Note also that for the
multiplication operator $M_{z_i}$ on $H^2_E(\mathbb{D}^n)$
\[M_{z_i}^* (\mathbb{S}(\cdot, \bm{w}) \eta) = \bar{w}_i
(\mathbb{S}(\cdot, \bm{w}) \eta),\]where $\bm{w} \in \mathbb{D}^n$,
$\eta \in E$ and $1 \leq i \leq n$.

We also have
\[\mathbb{S}^{-1}(\bm{z},\bm{w}) = \sum_{0 \leq i_1 < \ldots < i_l \leq n}
(-1)^l z_{i_1} \cdots z_{i_l} \bar{w}_{i_1} \cdots
\bar{w}_{i_l},\]for all $\bm{z}, \bm{w} \in \mathbb{D}^n$.

\NI For $H^2_E(\mathbb{D}^n)$ we set
\[\mathbb{S}_E^{-1}(\bm{M_z}, \bm{M_z}):= \mathop{\sum}_{0 \leq i_1 < \ldots < i_l \leq n} (-1)^l M_{z_{i_1}}
\cdots M_{z_{i_l}} M^*_{{z}_{i_1}} \cdots M^*_{{z}_{i_l}}.\]

The following Lemma is well-known in the study of reproducing kernel
Hilbert spaces.

\begin{lem}\label{P_E}
Let $E$ be a Hilbert space. Then \[\mathbb{S}_E^{-1}(\bm{M_z},
\bm{M_z}) = P_E,\]where $P_E$ is the orthogonal projection of
$H^2_E(\mathbb{D}^n)$ onto the space of all constant functions.
\end{lem}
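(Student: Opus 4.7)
The strategy is to recognize $\mathbb{S}_E^{-1}(\bm{M_z}, \bm{M_z})$ as a product of commuting projections and then verify the claimed equality on the reproducing kernels, which form a total subset of $H^2_E(\mathbb{D}^n)$.

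First, I would rewrite the defining alternating sum as a product. Starting from the factorization $\mathbb{S}^{-1}(\bm{z},\bm{w}) = \prod_{i=1}^n (1 - \bar{w}_i z_i)$, the goal is to show
\[
\mathbb{S}_E^{-1}(\bm{M_z}, \bm{M_z}) \;=\; \prod_{i=1}^n \bigl(I - M_{z_i} M_{z_i}^*\bigr).
\]
This rests on the doubly commuting property of $\{M_{z_1},\ldots,M_{z_n}\}$ on $H^2_E(\mathbb{D}^n)$, namely $M_{z_i}^* M_{z_j} = M_{z_j} M_{z_i}^*$ for $i \neq j$. Using this relation repeatedly, each summand $M_{z_{i_1}}\cdots M_{z_{i_l}} M_{z_{i_1}}^*\cdots M_{z_{i_l}}^*$ rearranges to $\prod_{k=1}^l (M_{z_{i_k}} M_{z_{i_k}}^*)$, and the factors $M_{z_i}M_{z_i}^*$ pairwise commute for distinct indices. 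Re-indexing the sum over subsets $S \subseteq \{1,\ldots,n\}$ then yields
\[
\mathbb{S}_E^{-1}(\bm{M_z}, \bm{M_z}) \;=\; \sum_{S \subseteq \{1,\ldots,n\}} (-1)^{|S|} \prod_{i \in S} M_{z_i} M_{z_i}^* \;=\; \prod_{i=1}^n \bigl(I - M_{z_i} M_{z_i}^*\bigr).
\]

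Next, I would verify the identity on the total set $\{\mathbb{S}(\cdot,\bm{w})\eta : \bm{w} \in \mathbb{D}^n,\,\eta \in E\}$. Using the eigenvalue relation $M_{z_i}^*(\mathbb{S}(\cdot,\bm{w})\eta) = \bar{w}_i\,\mathbb{S}(\cdot,\bm{w})\eta$, applying the commuting product gives
\[
\prod_{i=1}^n \bigl(I - M_{z_i} M_{z_i}^*\bigr)\bigl(\mathbb{S}(\cdot,\bm{w})\eta\bigr) \;=\; \prod_{i=1}^n (1 - \bar{w}_i z_i)\,\mathbb{S}(\cdot,\bm{w})\eta \;=\; \eta,
\]
by the defining formula for $\mathbb{S}$. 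On the other side, since $\mathbb{S}(0,\bm{w}) = 1$, we have $P_E \bigl(\mathbb{S}(\cdot,\bm{w})\eta\bigr) = \mathbb{S}(0,\bm{w})\eta = \eta$. Thus both bounded operators agree on a total subset and hence are equal on all of $H^2_E(\mathbb{D}^n)$.

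The only subtle step is the first one: justifying the rearrangement of operators in the defining sum using the doubly commuting relations. Once the product form is established, the verification on reproducing kernels is a direct computation that telescopes because of the precise matching between the polynomial $\prod_i(1-\bar{w}_i z_i)$ and the inverse of the Cauchy kernel.
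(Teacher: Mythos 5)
Your proof is correct, and its core is the same as the paper's (verify the identity on the total set of kernel functions $\mathbb{S}(\cdot,\bm{w})\eta$), but the route through the verification differs. You first recast the alternating sum as the product $\prod_{i=1}^n (I - M_{z_i}M_{z_i}^*)$ of commuting orthogonal projections, which requires invoking the doubly commuting relations $M_{z_i}^*M_{z_j}=M_{z_j}M_{z_i}^*$ ($i\neq j$) on all of $H^2_E(\D^n)$ — both to rearrange each summand and, implicitly, again when you apply the factors one at a time to the kernel and claim the action "telescopes" to multiplication by $\prod_i(1-\bar{w}_iz_i)$; that step is routine but deserves a sentence. The paper avoids any commutation argument: it computes the inner product $\langle \mathbb{S}_E^{-1}(\bm{M_z},\bm{M_z})\,\mathbb{S}(\cdot,\bm{z})\eta,\ \mathbb{S}(\cdot,\bm{w})\zeta\rangle$ term by term, moving half of each word $M_{z_{i_1}}\cdots M_{z_{i_l}}M^*_{z_{i_1}}\cdots M^*_{z_{i_l}}$ across the inner product as adjoints and using only the eigenvalue relation $M_{z_i}^*\mathbb{S}(\cdot,\bm{w})\eta=\bar{w}_i\mathbb{S}(\cdot,\bm{w})\eta$, so the sum collapses to $\mathbb{S}^{-1}(\bm{w},\bm{z})\mathbb{S}(\bm{w},\bm{z})\langle\eta,\zeta\rangle_E=\langle\eta,\zeta\rangle_E$. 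What your version buys is the structural identification of $\mathbb{S}_E^{-1}(\bm{M_z},\bm{M_z})$ as a product of commuting projections (indeed, once you have that, you could finish without kernels at all, since $\prod_i(I-M_{z_i}M_{z_i}^*)$ projects onto $\bigcap_i\ker M_{z_i}^*$, the constants — this is exactly the mechanism the paper reuses later for submodules in \eqref{W-R}); what the paper's version buys is a shorter, commutation-free computation. Either way the final step — two bounded operators agreeing on a total set are equal — is the same.
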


\begin{proof} for all $\bm{z}, \bm{w} \in \mathbb{D}^n$ and
$\eta, \zeta \in E$ we have
\[
\begin{split} \left\langle \mathbb{S}_E^{-1}(\bm{M_z}, \bm{M_z}) \right.& \left.
(\mathbb{S}(\cdot, \bm{z}) \eta), (\mathbb{S}(\cdot, \bm{w}) \zeta)
\right\rangle_{H^2_E(\mathbb{D}^n)}\\ & = \left \langle
\mathop{\sum}_{0 \leq i_1 < \ldots < i_l \leq n} (-1)^l M_{z_{i_1}}
\cdots M_{z_{i_l}} M^*_{{z}_{i_1}} \cdots M^*_{{z}_{i_l}}
(\mathbb{S}(\cdot, \bm{z}) \eta), (\mathbb{S}(\cdot, \bm{w}) \zeta)
\right\rangle_{H^2_E(\mathbb{D}^n)} \\ & = \mathop{\sum}_{0 \leq i_1
< \ldots < i_l \leq n} (-1)^l \left\langle M^*_{{z}_{i_1}} \cdots
M^*_{{z}_{i_l}} (\mathbb{S}(\cdot, \bm{z}) \eta),  M^*_{{z}_{i_1}}
\cdots
M^*_{{z}_{i_l}} (\mathbb{S}(\cdot, \bm{w}) \zeta) \right\rangle_{H^2_E(\mathbb{D}^n)} \\
& = \mathop{\sum}_{0 \leq i_1 < \ldots < i_l \leq n} (-1)^l
\bar{z}_{i_1} \cdots \bar{z}_{i_l} w_{i_1} \cdots w_{i_l} \langle
\mathbb{S}(\cdot, \bm{z}), \mathbb{S}(\cdot, \bm{w})
\rangle_{H^2(\mathbb{D}^n)} \langle \eta, \zeta \rangle_E \\ & =
\mathbb{S}^{-1}(\bm{w}, \bm{z}) \mathbb{S}(\bm{w}, \bm{z}) \langle
\eta, \zeta \rangle_E \\ & = \left \langle \eta, \zeta
\right\rangle_E
\\& = \langle P_{E} \mathbb{S}(\cdot, \bm{z}) \eta,
\mathbb{S}(\cdot, \bm{w}) \zeta \rangle_{H^2_E(\mathbb{D}^n)}
\end{split}\] Since $\{\mathbb{S}(\cdot, \bm{z}) \eta : \bm{z} \in
\mathbb{D}^n, \eta \in E\}$ is a total subset of
$H^2_{E}(\mathbb{D}^n)$, we have that \[\mathbb{S}_E^{-1}(\bm{M_z},
\bm{M_z}) = P_{E}.\]This completes the proof.
\end{proof}

In the following proposition we characterize the reducing submodules
of $H^2_E(\mathbb{D}^n)$.

\begin{prop}
\label{reducing}
  Let $\cls$ be a closed subspace of $H^2_{E}(\mathbb{D}^n)$. Then
  $\cls$ is a reducing submodule of $H^2_{E}(\mathbb{D}^n)$ if and
  only if \[\cls = H^2_{E_*}(\mathbb{D}^n),\]for some closed subspace
  $E_*$ of $E$.
\end{prop}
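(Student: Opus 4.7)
The plan is to exploit Lemma~\ref{P_E} to identify the right candidate for $E_*$, namely $E_* := \cls \cap E$ (viewing $E$ inside $H^2_E(\D^n)$ as the constant functions). The ``if'' direction is immediate: if $\cls = H^2_{E_*}(\D^n)$ with $E_* \subseteq E$ closed, then each $M_{z_i}$ and each $M_{z_i}^*$ leaves $H^2_{E_*}(\D^n)$ invariant, so $\cls$ is a reducing submodule. So the work is entirely in the forward direction.

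For the forward direction, assume $\cls$ is reducing, and let $P_\cls$ denote the orthogonal projection onto $\cls$. Since $\cls$ is invariant under each $M_{z_i}$ and each $M_{z_i}^*$, the projection $P_\cls$ commutes with $M_{z_i}$ and $M_{z_i}^*$ for all $i$. Therefore $P_\cls$ commutes with the polynomial
\[
\mathbb{S}_E^{-1}(\bm{M_z}, \bm{M_z}) = \sum_{0 \leq i_1 < \cdots < i_l \leq n}(-1)^l M_{z_{i_1}}\cdots M_{z_{i_l}} M^*_{z_{i_1}}\cdots M^*_{z_{i_l}},
\]
which equals $P_E$ by Lemma~\ref{P_E}. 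Thus $P_\cls P_E = P_E P_\cls$, and this product is the orthogonal projection onto $E_* := \cls \cap E$, a closed subspace of $E$.

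Next I would show $\cls = H^2_{E_*}(\D^n)$. The inclusion $H^2_{E_*}(\D^n) \subseteq \cls$ is immediate from module invariance: for any $\eta \in E_* \subseteq \cls$ and any multi-index $\alpha$, the element $z^\alpha \eta = M_{z_1}^{\alpha_1}\cdots M_{z_n}^{\alpha_n}\eta$ lies in $\cls$, and these span $H^2_{E_*}(\D^n)$. For the reverse inclusion, take $f \in \cls$ with Taylor expansion $f = \sum_\alpha a_\alpha z^\alpha$. For each multi-index $\alpha = (k_1, \ldots, k_n)$, the operator $(M_{z_1}^*)^{k_1}\cdots (M_{z_n}^*)^{k_n}$ applied to $f$ lands in $\cls$ (since $\cls$ is coinvariant), and then applying $P_E$ extracts the Taylor coefficient $a_\alpha$ as the constant term. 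Because $P_\cls$ commutes with all these operators, $a_\alpha = P_E (M_{z_1}^*)^{k_1}\cdots (M_{z_n}^*)^{k_n} f \in P_E \cls = E_*$. Hence every Taylor coefficient of $f$ lies in $E_*$, so $f \in H^2_{E_*}(\D^n)$. This gives $\cls \subseteq H^2_{E_*}(\D^n)$, completing the proof.

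The only genuinely delicate point is recognizing that Lemma~\ref{P_E} is what guarantees $P_\cls$ commutes with $P_E$; everything else is a direct computation with Taylor coefficients and module actions. No substantial obstacle is expected.
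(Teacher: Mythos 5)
Your proposal is correct and follows essentially the same route as the paper: both use Lemma~\ref{P_E} to conclude that $P_{\cls}$ commutes with $P_E$, identify $E_* = \cls \cap E$ via the product of commuting projections, and then verify the two inclusions by a Taylor-coefficient argument. The only cosmetic difference is that you extract coefficients with backward shifts $(M_{z}^*)^{\alpha}$ followed by $P_E$, while the paper applies $P_{\cls}$ termwise to the series and uses $P_{\cls}a_{\bm{k}} = P_{\cls}P_E a_{\bm{k}} \in E_*$.
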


\begin{proof} Let $\cls$ be a reducing submodule of $H^2_{E}(\mathbb{D}^n)$, that
is, for all $1 \leq i \leq n$ we have \[M_{z_i} P_{\cls} = P_{\cls}
M_{z_i}.\]By Lemma \ref{P_E}
\[
P_{E} P_{\cls} = \mathbb{S}_E^{-1}(\bm{M_z}, \bm{M_z}) P_{\cls} =
P_{\cls} \mathbb{S}_E^{-1}(\bm{M_z}, \bm{M_z}) = P_{\cls} P_{E}.
\]
In particular, that $P_{\cls} P_{E}$ is an orthogonal projection and
\[P_{\cls} P_{E} = P_{E} P_{\cls} = P_{E_*},\]where $E_* := E \cap
\cls$. Hence, for any
\[
f = \sum_{\bm{k} \in \mathbb{N}^n} a_{\bm{k}}
\bm{z}^{\bm{k}} \in \cls,
\]
where $a_{\bm{k}} \in E$ for all $\bm{k} \in \mathbb{N}^n$, we
have
\[
f = P_{\cls} f = P_{\cls} \left(\,\sum_{\bm{k} \in \mathbb{N}^n}
  M_z^{\bm{k}} a_{\bm{k}}\right) = \sum_{\bm{k} \in \mathbb{N}^n}
M_z^{\bm{k}} P_{\cls} a_{\bm{k}}.
\]
But $P_{\cls} a_{\bm{k}} = P_{\cls} P_{E} a_{\bm{k}} \in
E_*$. Consequently, $M_z^{\bm{k}} P_{\cls} a_{\bm{k}} \in
H^2_{E_*}(\mathbb{D}^n)$ for all $\bm{k} \in \mathbb{N}^n$ and hence
$f \in H^2_{E_*}(\mathbb{D}^n)$.  That is, $\cls \subseteq H^2_{
  E_*}(\mathbb{D}^n)$. For the reverse inclusion, it is enough to
observe that $ E_* \subseteq \cls$ and that $\cls$ is a reducing
submodule. The converse part is immediate. Hence the lemma follows.
\end{proof}

Let $\cls$ be a doubly commuting submodule of $H^2_E(\mathbb{D}^n)$.
Then \[ R_{z_i} R_{z_i}^* = M_{z_i} P_{{\cls}} M_{z_i}^* P_{{\cls}}
= M_{z_i} P_{{\cls}} M_{z_i}^*,
\]implies that $R_{z_i} R_{z_i}^*$ is an orthogonal projection of
$\cls$ onto $z_i \cls$ and hence $I_{\cls} - R_{z_i} R_{z_i}^*$ is
an orthogonal projection of $\cls$ onto $\cls \ominus z_i \cls$,
that is,
\[
I_{{\cls}} - R_{z_i} R_{z_i}^* = P_{{\cls} \ominus z_i {\cls}},
\]
for all $i = 1, \ldots, n$. Define \[\clw_i = \mbox{ran} (I_{\cls} -
R_{z_i} R_{z_i}^*) = \cls \ominus z_i \cls,\] for all $i = 1,
\ldots, n$, and
\[\clw = \bigcap_{i=1}^n \clw_i.\]Now let $\cls$ be a doubly commuting submodule of
$H^2_E(\mathbb{D}^n)$. By doubly commutativity of $\cls$ it follows
that (also see \cite{Sarkar}) \[(I_{{\cls}} - R_{z_i}
R_{z_i}^*)(I_{{\cls}} - R_{z_j} R_{z_j}^*) = (I_{{\cls}} - R_{z_j}
R_{z_j}^*)(I_{{\cls}} - R_{z_i} R_{z_i}^*),\]for all $i \neq j$.
Therefore $\{(I_{{\cls}} - R_{z_i} R_{z_i}^*)\}_{i=1}^n$ is a family
of commuting orthogonal projections and hence
\begin{equation}\label{W-R}\clw = \bigcap_{i=1}^n \clw_i =
\bigcap_{i=1}^n (\cls \ominus z_i \cls)
 = \bigcap_{i=1}^n \mbox{ran} (I_\cls - R_{z_i}
R_{z_i}^*)) =  \mbox{ran}(\mathop{\prod}_{i=1}^n (I_\cls - R_{z_i}
R_{z_i}^*)).\end{equation}

Now we present a wandering subspace theorem concerning doubly
commuting submodules of $H^2_E(\mathbb{D}^n)$. The result is a
consequence of a several variables analogue of the classical Wold
decomposition theorem as obtained by Gaspar and Suciu
\cite{GasparSuciu}. We provide a direct proof (also see Corollary
3.2 in \cite{Sarkar}).

\begin{thm}\label{wandering}
Let $\cls$ be a doubly commuting submodule of $H^2_E(\mathbb{D}^n)$.
Then \[\cls = \mathop{\sum}_{\bm{k} \in \mathbb{N}^n} \oplus
z^{\bm{k}} \clw.\]
\end{thm}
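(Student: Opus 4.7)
The plan is to exploit two key structural facts. First, each restriction $R_{z_i}$ is a \emph{pure} isometry on $\cls$, since
\[
\bigcap_{k \geq 0} z_i^k \cls \;\subseteq\; \bigcap_{k \geq 0} z_i^k H^2_E(\mathbb{D}^n) \;=\; \{0\}.
\]
Second, as already noted before (\ref{W-R}), the doubly commuting hypothesis makes the projections $P_{\clw_i} := I_\cls - R_{z_i} R_{z_i}^*$ mutually commuting, with $\prod_{i=1}^n P_{\clw_i} = P_\clw$. These two facts reduce the theorem to a clean product of single-variable Wold decompositions.

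For each fixed index $i$, the telescoping identity
\[
\sum_{k=0}^{N} R_{z_i}^{k}\,(I_\cls - R_{z_i} R_{z_i}^*)\,R_{z_i}^{*k} \;=\; I_\cls - R_{z_i}^{\,N+1} R_{z_i}^{*(N+1)},
\]
combined with purity (which forces $R_{z_i}^{N+1} R_{z_i}^{*(N+1)} \to 0$ strongly as $N \to \infty$), gives the single-variable SOT-identity $I_\cls = \sum_{k \geq 0} R_{z_i}^{k} P_{\clw_i} R_{z_i}^{*k}$. The heart of the argument is to multiply these $n$ identities together. Using the doubly commuting relations $R_{z_i} R_{z_j}^* = R_{z_j}^* R_{z_i}$ for $i \neq j$, one verifies that $R_{z_i}^{k_i}$ commutes with $R_{z_j}^{*k_j}$ and that each $P_{\clw_j}$ commutes with both $R_{z_i}^{k_i}$ and $R_{z_i}^{*k_i}$ whenever $i \neq j$. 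Consequently, a single $\bm{k}$-indexed summand rearranges as
\[
\prod_{i=1}^{n} R_{z_i}^{k_i} P_{\clw_i} R_{z_i}^{*k_i} \;=\; \Big(\prod_{i=1}^n R_{z_i}^{k_i}\Big)\Big(\prod_{i=1}^n P_{\clw_i}\Big)\Big(\prod_{i=1}^n R_{z_i}^{*k_i}\Big) \;=\; R_z^{\bm{k}}\, P_\clw\, R_z^{*\bm{k}},
\]
where $R_z^{\bm{k}} := R_{z_1}^{k_1}\cdots R_{z_n}^{k_n}$. Since $R_z^{\bm{k}}$ is an isometry, this operator is precisely the orthogonal projection onto $z^{\bm{k}}\clw$.

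Taking the product of the $n$ one-variable identities then yields
\[
I_\cls \;=\; \sum_{\bm{k} \in \mathbb{N}^n} R_z^{\bm{k}} P_\clw R_z^{*\bm{k}},
\]
and the summands are mutually orthogonal: for any $\bm{k} \neq \bm{k}'$ there is an index $i$ with $k_i \neq k_i'$, and the identity $P_{\clw_i} R_{z_i} = (I_\cls - R_{z_i} R_{z_i}^*)R_{z_i} = 0$ (which uses that $R_{z_i}$ is an isometry) forces the two corresponding projections to annihilate each other. This delivers the required decomposition $\cls = \sum_{\bm k \in \mathbb{N}^n} \oplus z^{\bm{k}}\clw$. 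The main technical obstacle I anticipate is the rigorous passage from a product of $n$ SOT-convergent sums to a single multi-indexed SOT-convergent sum; this will be handled one coordinate at a time, using that the partial-sum projections are monotone increasing and mutually commuting, so composition is SOT-continuous on the relevant bounded nets.
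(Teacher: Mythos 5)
Your proof is correct, but its mechanics differ from the paper's. The paper argues by induction on the number of variables: it invokes the classical one-variable Wold decomposition as a black box, first for $R_{z_1}$ on $\cls$ and then successively for the restrictions $R_{z_{m+1}}|_{\clw_1 \cap \cdots \cap \clw_m}$, the key step being the subspace identity $(\clw_1 \cap \cdots \cap \clw_m) \ominus z_{m+1}(\clw_1 \cap \cdots \cap \clw_m) = \bigcap_{i=1}^{m+1} \clw_i$, proved with the same commutation relations you use. You instead establish the operator identity $I_{\cls} = \sum_{\bm{k} \in \mathbb{N}^n} R_z^{\bm{k}} P_{\clw} R_z^{*\bm{k}}$ directly, by multiplying the $n$ one-variable resolutions of the identity obtained from the telescoping defect identity and purity; this in effect reproves the one-variable Wold theorem and then combines all variables at once. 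What your route buys: the projections onto $z^{\bm{k}}\clw$ appear in closed form, there is no induction and no intermediate wandering subspaces, and the limit-interchange issue you flag is actually benign, since your partial sums are the commuting projections $Q_i^{(N)} = I_{\cls} - R_{z_i}^{N+1} R_{z_i}^{*(N+1)}$ increasing strongly to $I_{\cls}$, and the estimate $\|x - Q_1^{(N)} \cdots Q_n^{(N)} x\| \leq \sum_{i=1}^n \|x - Q_i^{(N)} x\|$ gives strong convergence of the box partial sums. What the paper's route buys: everything stays at the level of subspaces and the Wold theorem is quoted rather than rederived, so no discussion of SOT limits of multi-indexed sums is needed. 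One step you should write out: in the orthogonality argument, for $k_i > k_i'$ collapse $R_{z_i}^{*k_i} R_{z_i}^{k_i'}$ to $R_{z_i}^{*(k_i - k_i')}$ (using that $R_{z_i}$ is an isometry), commute the factors with indices $j \neq i$ past $P_{\clw_i}$, and then apply the adjoint relation $R_{z_i}^* P_{\clw_i} = 0$ of your identity $P_{\clw_i} R_{z_i} = 0$; as stated, the annihilation is asserted a bit tersely.
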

\begin{proof}
First, note that if $\clm$ is a submodule of $H^2_E(\mathbb{D}^n)$
then \[\mathop{\bigcap}_{k \in \mathbb{N}} R_{z_i}^{*k} \clm
\subseteq \mathop{\bigcap}_{k \in \mathbb{N}} M_{z_i}^{*k}
H^2_E(\mathbb{D}^n) = \{0\},\]for each $i = 1, \ldots, n$.
Therefore, $R_{z_i}$ is a shift, that is, the unitary part
$\mathop{\bigcap}_{k \in \mathbb{N}} R_{z_i}^{*k} \clm$ in the Wold
decomposition (cf. \cite{NF}, \cite{Sarkar}) of $R_{z_i}$ on $\clm$
is trivial for all $i = 1, \ldots, n$. Moreover, if $\cls$ is doubly
commuting then
\[R_{z_i} (I_{\cls} - R_{z_j} R^*_{z_j}) = (I_{\cls} - R_{z_j}
R^*_{z_j}) R_{z_i},\]for all $i \neq j$. Therefore $\clw_j$ is a
$R_{z_i}$-reducing subspace for all $i \neq j$. Note also that for
all $1 \leq m <n$, \[\begin{split} \mathop{\bigcap}_{i=1}^{m+1}
\clw_i & = \mbox{ran} (\mathop{\prod}_{i=1}^{m+1} (I_{\cls} -
R_{z_i} R_{z_i}^*))
\\ & = \mbox{ran}(\mathop{\prod}_{i=1}^m (I_{\cls} - R_{z_i} R_{z_i}^*) -
R_{z_{m+1}} R_{z_{m+1}}^* \mathop{\prod}_{i=1}^m (I_{\cls} - R_{z_i}
R_{z_i}^*)) \\ & = \mbox{ran}(\mathop{\prod}_{i=1}^m (I_{\cls} -
R_{z_i} R_{z_i}^*) - R_{z_{m+1}} \mathop{\prod}_{i=1}^m (I_{\cls} -
R_{z_i} R_{z_i}^*)R_{z_{m+1}}^*) \\ & = (\clw_1 \cap \ldots \cap
\clw_m) \ominus z_{m+1} (\clw_1 \cap \cdots \cap
\clw_m),\end{split}\]and hence
\[(\clw_1 \cap \ldots \cap \clw_m) \ominus z_{m+1} (\clw_1 \cap
\cdots \cap \clw_m) = \mathop{\bigcap}_{i=1}^{m+1} \clw_i.\]We use
mathematical induction to prove that for all $2 \leq m \leq n$, we
have \[\cls = \mathop{\sum}_{\bm{k} \in \mathbb{N}^m} \oplus
z^{\bm{k}} (\clw_1 \cap \ldots \cap \clw_m).\] First, by Wold
decomposition theorem for the shift $R_{z_1}$ on $\cls$ we have
\[\cls = \mathop{\sum}_{k_1 \in \mathbb{N}} \oplus R_{z_1}^{k_1} \clw_1 = \mathop{\sum}_{k_1 \in \mathbb{N}} \oplus z_1^{k_1}
\clw_1.\]Again by applying Wold decomposition for $R_{z_2}|_{\clw_1}
\in \cll(\clw_1)$ we have \[\clw_1 = \mathop{\sum}_{k_2 \in
\mathbb{N}} \oplus R_{z_2}^{k_2} (\clw_1 \ominus z_2 \clw_1) =
\mathop{\sum}_{k_2 \in \mathbb{N}} \oplus z_2^{k_2} (\clw_1 \cap
\clw_2),\]and hence \[\cls = \mathop{\sum}_{k_1 \in \mathbb{N}}
\oplus z_1^{k_1} \Big(\mathop{\sum}_{k_2 \in \mathbb{N}} \oplus
z_2^{k_2} (\clw_1 \cap \clw_2)\Big) =  \mathop{\sum}_{k_1, k_2 \in
\mathbb{N}} \oplus z_1^{k_1} z_2^{k_2} (\clw_1 \cap
\clw_2).\]Finally, let \[\cls = \mathop{\sum}_{\bm{k} \in
\mathbb{N}^m} \oplus z^{\bm{k}} (\clw_1 \cap \ldots \cap
\clw_m),\]for some $m < n$. Then we again apply the Wold
decomposition on the isometry
$$
R_{z_{m+1}}|_{\clw_1 \cap \ldots \cap
\clw_m} \in \cll(\clw_1 \cap \ldots \cap \clw_m)
$$
to obtain
\[\begin{split} \clw_1 \cap \ldots \cap \clw_m &  =
\mathop{\sum}_{k_{m+1} \in \mathbb{N}} \oplus z_{m+1}^{k_{m+1}}
\Big((\clw_1 \cap \ldots \cap \clw_m) \ominus z_{m+1} \clw_1 \cap
\ldots \cap \clw_m\Big)\\ & = \mathop{\sum}_{k_{m+1} \in \mathbb{N}}
\oplus z_{m+1}^{k_{m+1}} (\clw_1 \cap \ldots \cap \clw_m \cap
\clw_{m+1}),\end{split}\]which yields \[\cls = \mathop{\sum}_{\bm{k}
\in \mathbb{N}^{m+1}} \oplus z^{\bm{k}} (\clw_1 \cap \ldots \cap
\clw_{m+1}).\]This completes the proof.
\end{proof}

We now turn to the proof of Theorem \ref{Beurling}.

\begin{proof}[Proof of Theorem \ref{Beurling}]

By Theorem \ref{wandering} we have \begin{equation}\label{S=}\cls =
\mathop{\sum}_{\bm{k} \in \mathbb{N}^{n}} \oplus z^{\bm{k}}
(\mathop{\bigcap}_{i=1}^n \clw_i).\end{equation}

Now define the Hilbert space $E$ by \[E = \mathop{\bigcap}_{i=1}^n
\clw_i,\] and the linear operator $V : H^2_{E}(\mathbb{D}^n) \raro
H^2_{E_*}(\mathbb{D}^n)$ by
\[
V\left(\,\sum_{\bm{k} \in \mathbb{N}^n} a_{\bm{k}} z^{\bm{k}}\right) =
\sum_{\bm{k} \in \mathbb{N}^n} M_{z}^{\bm{k}} a_{\bm{k}},
\]
where
$$
\sum_{\bm{k} \in \mathbb{N}^n} a_{\bm{k}} z^{\bm{k}} \in
H^2_{E}(\mathbb{D}^n)
$$
and $a_{\bm{k}} \in E$ for all $\bm{k} \in \mathbb{N}^n$. Observe
that \[\begin{split}\|\sum_{\bm{k} \in \mathbb{N}^n} M_{z}^{\bm{k}}
a_{\bm{k}}\|^2_{H^2_{E_*}(\mathbb{D}^n)} & = \|\sum_{\bm{k} \in
\mathbb{N}^n} {z}^{\bm{k}} a_{\bm{k}}\|^2_{H^2_{E_*}(\mathbb{D}^n)}
= \sum_{\bm{k} \in
\mathbb{N}^n} \|{z}^{\bm{k}} a_{\bm{k}}\|^2_{H^2_{E_*}(\mathbb{D}^n)}, \\
\end{split}
\]where the last equality follows from the orthogonal decomposition
of $\cls$ in (\ref{S=}). Therefore, \[\begin{split}\|\sum_{\bm{k}
\in \mathbb{N}^n} M_{z}^{\bm{k}}
a_{\bm{k}}\|^2_{H^2_{E_*}(\mathbb{D}^n)} & = \sum_{\bm{k} \in
\mathbb{N}^n} \|{z}^{\bm{k}}
a_{\bm{k}}\|^2_{H^2_{E_*}(\mathbb{D}^n)} =  \sum_{\bm{k} \in
\mathbb{N}^n} \| a_{\bm{k}}\|^2_{H^2_{E_*}(\mathbb{D}^n)} =
\sum_{\bm{k} \in \mathbb{N}^n} \| a_{\bm{k}}\|^2_{E} \\& =
\|\sum_{\bm{k} \in \mathbb{N}^n} {z}^{\bm{k}}
a_{\bm{k}}\|^2_{H^2_{E}(\mathbb{D}^n)},
\end{split}\]and hence $V$ is an isometry. Moreover, for all
$\bm{k}\in \mathbb{N}^n$ and $\eta \in E$ we have \[V M_{z_i}
(z^{\bm{k}} \eta) = V (z^{\bm{k} + \bm{e}_i} \eta) = M_z^{\bm{k} +
\bm{e}_i} \eta = M_{z_i} (M_z^{\bm{k}} \eta) = M_{z_i} V (z^{\bm{k}}
\eta),\]that is, $V M_{z_i} = M_{z_i} V$ for all $i = 1, \ldots, n$.
Hence $V$ is a module map. Therefore, \[ V = M_{\Theta},
\]
for some bounded holomorphic function $\Theta \in H^{\infty}_{E\to
  E_*}(\mathbb{D}^n)$ (cf. page 655 in \cite{BLTT}). Moreover, since
  $V$ is an isometry, we have \[M_\Theta^* M_\Theta =
  I_{H^2_{E}(\mathbb{D}^n)},\]that is, that $\Theta$ is an inner
  function. Also since $M_{z_i} E \subseteq \cls$ for all $i = 1, \ldots, n$ we
have that \[\mbox{ran} V \subseteq \cls.\]Also by (\ref{S=}) that
$\cls \subseteq \mbox{ran} V$. Hence it follows that
\begin{equation*}\label{S=V}\mbox{ran} V = \mbox{ran} M_\Theta =
\cls,\end{equation*}that is, \[\cls = \Theta H^2_E(\mathbb{D}^n).\]
Finally, for all $i = 1, \ldots, n$, we have \[\cls \ominus z_i \cls
= \Theta H^2_E(\mathbb{D}^n) \ominus z_i \Theta H^2_E(\mathbb{D}^n)
= \{ \Theta f : f \in H^2_{E}(\mathbb{D}^n), M_{z_i}^* \Theta f =
0\},\]and hence by (\ref{W-R})
\[\begin{split} E & = \bigcap_{i=1}^n \clw_i = \bigcap_{i=1}^n (\cls
\ominus z_i \cls) = \{\Theta f : M_{z_i}^* \Theta f = 0, f \in
H^2_{E}(\mathbb{D}^n), \forall i = 1, \ldots, n\} \\ & \subseteq \{g
\in H^2_{E_*}(\mathbb{D}^n) : M_{z_i}^*g = 0, \forall i = 1, \ldots,
n\} = E_*,\end{split}\]that is, \[E \subseteq E_*.\]

To prove the converse part, let $\cls = M_{\Theta}
H^2_{E}(\mathbb{D}^n)$ be a submodule of $H^2_{E_*}(\mathbb{D}^n)$ for
some inner function $\Theta \in H^{\infty}_{E\to
  E_*}(\mathbb{D}^n)$. Then \[P_{\cls} = M_{\Theta} M_{\Theta}^*,\]
and hence for all $i \neq j$,
\[
\begin{split} M_{z_i} P_{\cls} M_{z_j}^* & = M_{z_i} M_{\Theta}
M_{\Theta}^* M_{z_j}^* = M_{\Theta} M_{z_i}
M_{z_j}^* M_{\Theta}^* = M_{\Theta} M_{z_j}^* M_{z_i} M_{\Theta}^*\\
&  = M_{\Theta} M_{z_j}^* M_{\Theta}^* M_{\Theta} M_{z_i}
M_{\Theta}^* =
M_{\Theta} M_{\Theta}^* M_{z_j}^* M_{z_i} M_{\Theta} M_{\Theta}^*\\
& = P_{\cls} M_{z_j}^* M_{z_i} P_{\cls}.
\end{split}
\]
This implies
\[
R_{z_j}^* R_{z_i} = P_{\cls} M_{z_j}^* P_{\cls} M_{z_i}|_{\cls} =
P_{\cls} M_{z_j}^* M_{z_i}|_{\cls} = M_{z_i} P_{\cls} M_{z_j}^* =
R_{z_i} R_{z_j}^*,
\]
that is, $\cls$ is a doubly commuting submodule. This completes the
proof.
\end{proof}

\section{Tolokonnikov's Lemma for the Polydisc}
\label{s2}

We will need the following lemma, which is a polydisc version of a
similar result proved in the case of the disc in Nikolski's book
\cite{Nikolski2}*{p.44-45}. Here we use the notation $M_g$ for the
multiplication operator on $H^2_E$ induced by $g\in
H^\infty_{E\rightarrow E_*}$.

\begin{lem}[Lemma on Local Rank]
\label{Lemma_on_local_rank}
  Let $E, E_c$ be Hilbert spaces, with $\dim E , \dim E_c<\infty$. Let $g\in
  H^\infty_{E_c \to E}(\D^n)$ be such that
\[
\ker M_g =\{h \in H^2_{E_c}(\D^n)\;:\; g(z)h(z)\equiv 0\}=\Theta
H^2_{E_a} (\D^n),
\]
where $E_a$ is a Hilbert space and $\Theta$ is a
$\mathcal{L}(E_a,E_c)$-valued inner function. Then
\[
\dim E_c= \dim E_a + \textrm{\em rank}\;g,
\]
where $\textrm{\em rank}\;g:= \displaystyle \max_{\zeta \in \D^n} \textrm{\em
  rank}\; g(\zeta)$.
\end{lem}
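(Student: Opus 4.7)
The plan is to prove $\dim E_c=\dim E_a+r$ (writing $r:=\textrm{rank}\, g$, $d_c:=\dim E_c$, $d_a:=\dim E_a$) by matching $d_a\leq d_c-r$ and $d_a\geq d_c-r$, both via pointwise linear algebra at a single carefully chosen interior point $\zeta_0\in\D^n$.

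For the upper bound: the inclusion $\Theta H^2_{E_a}\subseteq\ker M_g$ forces $g(z)\Theta(z)=0$ for all $z\in\D^n$, hence $\Theta(z)E_a\subseteq\ker g(z)$ pointwise. By definition of $r$, some $r\times r$ minor of $g$ gives a nonzero $h\in H^\infty(\D^n)$; and since $\Theta$ is inner, $\Theta(\zeta)$ is an isometry of rank $d_a$ for a.e.\ $\zeta\in\mathbb{T}^n$, so a pigeonhole over the finitely many $d_a\times d_a$ minors of $\Theta$ produces a nonzero $M\in H^\infty(\D^n)$. Pick $\zeta_0$ in the nonempty open dense set $\{h\neq 0\}\cap\{M\neq 0\}$: then $g(\zeta_0)$ has rank exactly $r$, so $\dim\ker g(\zeta_0)=d_c-r$, while $\Theta(\zeta_0)$ has rank $d_a$, so $\dim\Theta(\zeta_0)E_a=d_a$. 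The inclusion gives $d_a\leq d_c-r$.

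For the reverse inequality I plan to exhibit $d_c-r$ explicit elements of $\ker M_g$ via Cramer's rule. Fix $J\subseteq\{1,\ldots,\dim E\}$ and $K\subseteq\{1,\ldots,d_c\}$ with $|J|=|K|=r$ and $h=\det(g|_{J\times K})$. On $\{h\neq 0\}$, each column $g_l$ with $l\notin K$ has a unique expansion $g_l=\sum_{k\in K}c_{l,k}\,g_k$, and Cramer's rule identifies $\alpha_{l,k}:=h\cdot c_{l,k}$ with a specific $r\times r$ minor of $g$, placing it in $H^\infty(\D^n)$; the identity
\[
h(z)\,g_l(z)=\sum_{k\in K}\alpha_{l,k}(z)\,g_k(z)
\]
then extends by analytic continuation to all of $\D^n$. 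For each $l\notin K$ define $v^{(l)}\colon\D^n\to E_c$ by placing $h$ in the $l$-th coordinate, $-\alpha_{l,k}$ in the $k$-th coordinate for each $k\in K$, and $0$ elsewhere; every coordinate lies in $H^\infty(\D^n)$, so $v^{(l)}\in H^2_{E_c}(\D^n)$. The displayed identity gives $M_g v^{(l)}\equiv 0$, so $v^{(l)}\in\ker M_g=\Theta H^2_{E_a}$ and hence $v^{(l)}(\zeta_0)\in\Theta(\zeta_0)E_a$.

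Finally, the projections of $\{v^{(l)}(\zeta_0):l\notin K\}$ onto the coordinates indexed by $\{m\notin K\}$ are $\{h(\zeta_0)e_l:l\notin K\}$, a linearly independent collection since $h(\zeta_0)\neq 0$; hence the $v^{(l)}(\zeta_0)$ themselves form a linearly independent set of $d_c-r$ vectors inside $\Theta(\zeta_0)E_a$, giving $d_a\geq d_c-r$. Combined with the upper bound, $d_a=d_c-r$, as required. The main obstacle I anticipate is the construction step: one must verify that the Cramer coefficients $\alpha_{l,k}$ genuinely lie in $H^\infty(\D^n)$ rather than only in the meromorphic field, and that the column identity persists across $\{h=0\}$. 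Both are handled by writing $\alpha_{l,k}$ explicitly as $r\times r$ minors of $g$ and invoking analytic continuation; once the $v^{(l)}$ are in place, the argument collapses to the pointwise dimension count above.
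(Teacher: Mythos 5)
Your proposal is correct and follows essentially the same route as the paper's proof: the upper bound $\dim E_a\le \dim E_c-\mathrm{rank}\,g$ comes from the pointwise inclusion of $\Theta(\zeta)E_a$ in $\ker g(\zeta)$ evaluated at a generic point where a maximal minor of $g$ and a top-order minor of $\Theta$ are both nonzero, and the reverse bound comes from manufacturing $\dim E_c-\mathrm{rank}\,g$ independent elements of $\ker M_g$ out of minors of $g$. Your column-by-column Cramer construction of the $v^{(l)}$ is exactly the paper's matrix $\Omega=\bigl[\begin{smallmatrix} g_1^{\mathrm{co}}g_2\\ -\det g_1\end{smallmatrix}\bigr]$ written one column at a time (and in fact gives a cleaner justification, via analytic continuation, of the paper's step $\gamma_2\det g_1=\gamma_1 g_1^{\mathrm{co}}g_2$).
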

\begin{proof} We have $\ker M_g=\{ h  \in H^2_{E_c}(\D^n): gh\equiv 0\}$.
If $\zeta \in \D^n$, then let
\[
[\ker M_g](\zeta):= \{ h(\zeta)\;:\; h \in \ker M_g\}.
\]
We claim that $[\ker M_g](\zeta)=\Theta (\zeta) E_a$. Indeed, let $v\in [\ker M_g](\zeta)$. Then
$v=h(\zeta)$ for some element  $h\in \ker M_g=\Theta H^2_{E_a}(\D^n)$. So $h=\Theta\varphi$, for some
$\varphi\in H^2_{E_a}(\D^n)$. In particular, $v= h(\zeta)=\Theta(\zeta)\varphi(\zeta)$, where
$\varphi(\zeta)\in E_a$. So
\begin{equation}
\label{inclu_1_a}
 [\ker M_g](\zeta)\subset \Theta(\zeta) E_a.
\end{equation}
On the other hand, if $w\in \Theta(\zeta)E_a$, then $w=\Theta(\zeta) x$, where $x\in E_a$. Consider
the constant function $\mathbf{x} $ mapping $\D\owns \mathbf{z} \stackrel{\mathbf{x}}{\mapsto} x\in E_a$.
Clearly $\mathbf{x}\in H^2_{E_a}(\D^n)$. So $h:=\Theta \mathbf{x} \in \Theta H^2_{E_a}(\D^n)=\ker M_g$. Hence
$w=\Theta(\zeta) x=(\Theta \mathbf{x})(\zeta)=h(\zeta)$, and so $w\in [\ker M_g](\zeta)$. So we also have that
\begin{equation}
\label{inclu_1_b}
\Theta(\zeta) E_a\subset  [\ker M_g](\zeta).
\end{equation}
Our claim that $[\ker M_g](\zeta)=\Theta (\zeta) E_a$ follows from \eqref{inclu_1_a} and \eqref{inclu_1_b}.

Suppose that for a $\zeta \in
\D^n$, $v\in [\ker M_g](\zeta)$. Then
$v=h(\zeta) $ for some $h\in \ker M_g$. Thus $gh\equiv 0$ in $\D^n$, and in particular,
$g(\zeta) v= g(\zeta) h(\zeta)=0$. Thus $v\in \ker g(\zeta)$. So we have that
$[\ker M_g](\zeta) \subset \ker g(\zeta)$. Hence $\dim [\ker M_g](\zeta)\leq \dim \ker g(\zeta)$.
 Consequently
\[
\dim \Theta(\zeta)E_a
=
\dim [\ker M_g](\zeta)
\leq
\dim \ker g(\zeta)
=
\dim E_c-\textrm{rank}\; g(\zeta),
\]
where the last equality follows from the Rank-Nullity Theorem.
 Since $\Theta$ is inner, we have that the boundary values of $\Theta$ satisfy
 $\Theta(\zeta)^*\Theta(\zeta)=I_{E_c}$ for almost all
$\zeta \in {\mathbb{T}}^n$. So there is an open set $U\subset \D^n$ such that
for all  $\zeta\in U$
$$
\dim E_a= \dim \Theta(\zeta)E_a.
$$
But from the definition of the rank of $g$, we know that
there is a $\zeta_*\in \D^n$ such that we have $k:=\textrm{rank}\; g=\textrm{rank}\; g(\zeta_*)$.
So there is a $k\times k$ submatrix of $g(\zeta_*)$ which is invertible. Now look
at the determinant of this $k\times k$ submatrix of $g$. This is a holomorphic function,
and so it cannot be identically zero in the open set $U$. So there must exist a point $\zeta_1 \in U\subset \D^n$
such that $\textrm{rank}\; g=\textrm{rank}\; g(\zeta_1)$ and $\dim E_a= \dim \Theta(\zeta_1)E_a$.
Hence $\dim E_a \leq
\dim E_c -\textrm{rank}\; g$.

For the proof of the opposite inequality, let us consider a principal
minor $g_1 (\zeta_1)$ of the matrix of the operator $g(\zeta_1)$ (with
respect to two arbitrary fixed bases in $E_c$ and $E$ respectively).
Then $\det g_1\in H^\infty$, $\det g_1 \not\equiv 0$. Let $E_c=E_{c,1}
\oplus E_{c,2}$, $E=E_{1}\oplus E_{2}$ ($\dim E_{c,1}= \dim E_{1}
=\textrm{rank}\; g(\zeta_1)$) be the decompositions of the spaces $E_c$
and $E$ corresponding to this minor, and let
\[
g(\zeta)= \left[ \begin{array}{cc} g_1(\zeta)& g_2 (\zeta)\\
\gamma_1(\zeta) & \gamma_2 (\zeta) \end{array} \right] , \quad \zeta \in \D^n,
\]
be the matrix representation of $g(\zeta)$ with respect to this
decomposition. Owing to our assumption on the rank, it follows that there is a matrix function $\zeta \mapsto W(\zeta)$ such that
$$
\left[ \begin{array}{cc}\gamma_1(\zeta) & \gamma_2 (\zeta) \end{array} \right]
=
W(\zeta)\left[ \begin{array}{cc} g_1(\zeta)& g_2 (\zeta) \end{array} \right].
$$
So $\gamma_2 (\zeta)=W(\zeta) g_2 (\zeta) =(\gamma_1(\zeta) (g_1(\zeta))^{-1}) g_2 (\zeta) $. Thus with
$g_1^{\textrm{co}}:=(\det g_1 ) g_1^{-1}$, we have
\[
\gamma_2 \det g_1 =\gamma_1 g_1^{\textrm{co}} g_2,
\]
and using this we get the inclusion $M_\Omega H^2_{E_{c,2}} (\D^n)\subset \ker M_g$,
where $\Omega \in H^\infty_{E_{c,2} \to E_c}(\D^n)$ is given by
\[
\Omega = \left[ \begin{array}{cc} g_1^{\textrm{co}} g_2 \\ -\det g_1
  \end{array} \right].
\]
We have $
\textrm{rank}\; \Omega =\dim E_{c,2} =\dim E_c- \textrm{rank}\; g=
\dim \ker (g(\zeta_1))$. Consequently, we obtain
  $\dim [\ker M_g](\zeta_1) \geq \dim \ker(g(\zeta_1))$.
\end{proof}

We now turn to the extension of Tolokonnikov's Lemma  to the polydisc.

\begin{proof}[Proof of Theorem \ref{Tolo}]

(ii) $\Rightarrow$ (i): If $g:=P_{E} F^{-1}$, then $gf=I$.  It only
remains to show that the operators $M_{z_1}, \ldots, M_{z_n}$ are
doubly commuting on the space $\ker M_g$.  Let $\Theta$, $\Gamma$ be
such that:
\[
F=\left[ \begin{array}{cc}f & \Theta \end{array} \right]
\quad \textrm{ and } \quad
F^{-1}=\left[ \begin{array}{cc}g \\ \Gamma \end{array} \right] .
\]
Since $F F^{-1} =I_{E_c}$, it follows that $f g +\Theta \Gamma =I_{E_c}$.
Thus if $h \in H^{2}_{E_c} (\D^{n})$ is such that $g h=0$, then $\Theta
(\Gamma h) =h$, and so $h \in \Theta H^{2}_{E_c \ominus E)} (\D^{n} )$.
Hence $\ker M_g \subset \textrm{ran}\;M_\Theta$.  Also, since $F^{-1}F=I$,
it follows that $g\Theta =0$, and so $\textrm{ran}\; M_\Theta \subset \ker M_g$.
So $\ker M_g = \textrm{ran}\; M_\Theta= \Theta H^{2}_{E_c
\ominus E} (\D^{2})$. By Theorem~\ref{Beurling}, the operators $M_{z_1}, \ldots, M_{z_n}$ must doubly
commute on the subspace $\ker M_g$.

\medskip

\noindent (i) $\Rightarrow$ (ii):  Let
$$
\mathcal{S}:=\{h\in H^2_{E_c}(\D^n)\;:\; g(z)h(z)\equiv 0\}=\ker\, g.
$$
$\mathcal{S}$ is a closed non-zero invariant subspace of
$H^2_{E_c}(\D^n)$. Also, by assumption, $M_{z_1},\ldots, M_{z_n}$ are doubly
commuting operators on $\mathcal{S}$.  Then by the above Theorem \ref{Beurling},
there exists an auxiliary Hilbert space $E_a$ and an inner function
$\widetilde{\Theta}$ with values in $\mathcal{L}(E_a,E_c)$ with $\dim
E_a\leq\dim E_c$ such that
$$
\mathcal{S}=\widetilde{\Theta} H^2_{E_a}(\D^n).
$$
By the Lemma on Local Rank, $\dim E_a=\dim
E_c-\textnormal{rank}\; g=\dim E_c-\dim E=\dim (E_c\ominus E)$.  Let
$U$ be a (constant) unitary operator from $E_c\ominus E$ to $E_a$ and
define $\Theta:=\widetilde{\Theta} U$. Then $\Theta$ is inner, and we
have that $\ker g=\Theta H^2_{E_c\ominus E}(\D^n)$.  To get $F\in
H^\infty_{E_c\to E_c}(\D^n)$ define the function $F$ for $z\in\D^n$ by
$$
F(z)e:=
\left\{
\begin{array}{cl}
 f(z)e & \textrm{if } e\in E  \\
 \Theta(z)e & \textrm{if } e\in E_c\ominus E.
\end{array}
\right.
$$
We note that $F\in H^\infty(\D^n)$ and $F\vert_{E}=f$. We now show
that $F$ is invertible. With this in mind, we first observe that
$$
(I-fg)H^2_{E_c}(\D^n)\subset\Theta H^2_{E_c\ominus E}(\D^n)=\ker M_g.
$$
This follows since $g(I-fg)h=gh-gh=0$ for all $h\in H^2_{E_c}(\D^n)$.
Thus we have that $\Theta^*(I-fg)\in H^\infty_{E_c\to E_c\ominus E}(\D^n)$. Now,
define $\Omega=g\oplus \Theta^*(I-fg)$. Clearly $\Omega\in
H^\infty_{E_c\to E_c}(\D^n)$.  Next, note that
$$
F\Omega=fg+\Theta\Theta^*(I-fg)=I.
$$
Similarly,
\begin{eqnarray*}
  \Omega F
  & = &
  gf\mathbb{P}_{E}+\Theta^*(I-fg)(f\mathbb{P}_{E}+\Theta\mathbb{P}_{E_c\ominus E})
  \\
  & = &
  \mathbb{P}_{E}+\Theta^*(f\mathbb{P}_{E}-fgf\mathbb{P}_{E}+\Theta\mathbb{P}_{E_c\ominus E})
  \\
  & = &
  \mathbb{P}_{E}+\Theta^*\Theta\mathbb{P}_{E_c\ominus E}=I.
\end{eqnarray*}
Thus we have that $F^{-1}\in H^\infty(\D^n;E_c\to E_c)$.
\end{proof}

\subsection*{Acknowledgements}

The authors thank
the anonymous referee for the careful review,
for the help in improving the presentation
of the paper, and  also for suggesting Example~\ref{example1_6}.

\end{document}